\newtheorem{rem}{Remark}
\newtheorem{theorem}{Theorem}
\newcommand{\bX}{\bm{X}}
\newcommand{\bY}{\boldsymbol{Y}}
\newcommand{\lam}{\boldsymbol{\Lambda}}
\newcommand{\by}{\boldsymbol{y}}
\newcommand{\bz}{\boldsymbol{z}}
\begin{document}

%
\runningtitle{Performance Bounds for Graphical Record Linkage}

%

\twocolumn[

\aistatstitle{Performance Bounds for Graphical Record Linkage}

\aistatsauthor{ Rebecca C. Steorts \And Matt Barnes \And  Willie Neiswanger }

\aistatsaddress{ Departments of Statistical Science \\
and Computer Science\\
Duke University \\
\texttt{\small beka@stat.duke.edu}
 \And  
The Robotics Institute \\
Carnegie Mellon University \\
\texttt{\small mbarnes1@cs.cmu.edu}
 \And 
 Machine Learning Department \\
Carnegie Mellon University \\
\texttt{\small willie@cs.cmu.edu}} ]

%

\begin{abstract}
Record linkage involves merging
records in large, noisy databases to remove duplicate entities. 
It has become
an important area because of its widespread occurrence in bibliometrics, public health, official statistics production, political science, and beyond. Traditional linkage methods 
directly linking records to one another are computationally infeasible
as the number of records grows.
As a result, it is increasingly common for researchers to treat record linkage as a
clustering task, in which each latent entity is associated with one or more
noisy database records. We critically assess performance bounds
 using the  Kullback-Leibler (KL) divergence under a Bayesian record linkage framework, making connections to Kolchin partition models. We provide an upper bound using the KL divergence and a lower bound on the minimum probability of misclassifying a latent entity. We give insights for when our bounds hold using simulated data and provide practical user guidance. 
 
\end{abstract}

\section{Introduction}
\label{sec:intro}
Record linkage (de-deduplication or entity resolution) involves identifying duplicate
records in large, noisy databases~\cite{christen_2011}.
Traditional linkage methods that
directly link records to one another become computationally infeasible
as the number of records grows~\cite{christen_2011,winkler_2006}, and thus, it is
increasingly common for researchers to treat linkage as a
clustering task, in which latent entities are associated with one or more
noisy database records, and the inferential goal is to identify the
latent entity underlying each observed database
record~\cite{steorts15entity, steorts14smered, steorts??bayesian}. 
Although there are
many probabilistic, generative models for clustering --- of which
several have been used for record linkage --- the
theoretical properties, such as performance bounds, have such not
been critically assessed.

The work of \cite{steorts15entity, steorts14smered, steorts??bayesian} attempted to deconstruct  distorted data by latent variable mixture models. The authors achieved this by clustering similar records to a hypothesized latent entity for each observed record, where their \emph{linkage structure} kept track of which latent entity belongs to the same observed records. This is modeled through a latent variable mixture model with a distortion process on the data {(sections \ref{sec:categorical} and \ref{sec:string})}. Thus, the main goal is to be able to take distorted data and uncover the underlying structure in the presence of noise. This is similar to signal processing, where a signal is received in the presence of some noise and often the goal is to understand if the underlying true (latent) signal can be recovered. 
We develop performance bounds under the framework proposed by \cite{steorts15entity, steorts14smered, steorts??bayesian}.

We provide an upper bound on the Kullback-Leibler (KL) divergence between models 
with different linkage structures and use it to provide a lower bound  on the minimum probability of misclassifying a latent entity. More precisely, under the categorical model of \cite{steorts14smered,steorts??bayesian} and string model of \cite{steorts15entity}, we find the minimum probability of getting a latent entity incorrect. 
We make connections to Kolchin partition (KP) models \cite{pitman}, along with extending our overall KL bounds in general. Finally, we explore how our bounds perform in practice and describe their user practicality.



\subsection{Prior work}
Bayesian methods and latent variable modeling have become recently popularized in record linkage models. A major advantage of Bayesian methods is their natural handling of uncertainty quantification for the resulting estimates. The first notion of understanding a distortion process for record linkage is the hit-miss-model,  which uses a binary distortion process on the data \cite{copas_1990}.  Within the Bayesian paradigm, most work has focused on specialized approaches related to linking two files \cite{gutman_2013, liseo_2011}.  These contributions, while valuable, do not easily generalize to more than two files or to de-duplication within a single file. For a review of recent development in Bayesian methods, see \cite{liseo_2013}. 

The work of \cite{steorts14smered,steorts??bayesian} recently introduced a Bayesian model that simultaneously handled record linkage and de-duplication for categorical data. Their approach allowed for natural uncertainty quantification during analysis and post-processing.  
Finally, \cite{sadinle_2014} recently extended the  work of \cite{steorts??bayesian} to both categorical and string valued data using a coreference matrix or a partitioning approach. In the later paper, it was shown that the coreference matrix is a special case of the linkage structure, thus, we work with the linkage structure. 
Another advantage of \cite{steorts??bayesian} and similar approaches is that their linkage structure is amenable to an efficient MCMC inference algorithm. 
{These models have become practically relevant as they have been shown to perform well on a variety of applications, including official statistics and medical data. In addition, extensions have been made to more general framework of models \cite{sadinle_2014, liseo_2011, zanella2016microclustering}, which is incorporated into our framework in section \ref{sec:gibbs}.}


Given the noted distortion process, deriving performance bounds seems  natural to recover the underlying structure. For example, much work has been done in information theory for subset selection in graphical model selection, signal de-noising, compressive sensing, and others. 
In compressed sensing, one question recently addressed in \cite{donoho2006}, was directly measuring the part of the data from sounds and images that \emph{will not} be thrown away. We make a connection here, as in record linkage we wish to take noisy, distorted data and recover this under the KL divergence. Divergence functions by \cite{shannon1948, kullback_1951} are useful in many applications including recent statistical applications of clustering, as done in \cite{banerjee2005} for hard clustering to obtain optimal quantization by minimizing the Bregman divergence (motivated by rate distortion theory). \\

The rest of this paper proceeds as follows. 
Two recent record linkage models are given in section~\ref{sec:background}; Section~\ref{sec:categorical} and section~\ref{sec:string} review these  models. 
 Section~\ref{sec:properties} derives the respective performance bounds, while section \ref{sec:gibbs} extends our general result to a wider class of models. Section \ref{sec:experiments} shows performance of the bounds in practice, discusses our findings and user practicality. 
  Section \ref{sec:discussion} discusses future work.
 
\vspace{-2mm}
\section{Bayesian Record Linkage}
\label{sec:background}
We assume two Bayesian record linkage models, one dealing with categorical data and the other dealing with both categorical and noisy string data, such as names, addresses, etc. 
The first is that of \cite{steorts14smered,steorts??bayesian}, and the second is that of \cite{steorts15entity}.


\subsection{Categorical Bayesian Record Linkage}
\label{sec:categorical}

We review common notation to both models.\footnote{For a toy example of the record linkage process, see the Supplementary Material.}
{Let $\bX=(X_1,\ldots,X_n)$} represent the data, with $k$ databases, indexed by~$i$.  The $i$th list has $n_i$ observed records, indexed by~$j$.  Each record corresponds to one of $N$ latent entities, indexed by $j'.$ Assume
 $N=\sum_{i=1}^k n_i$ without loss of generality.
Each record or latent entity has values on $p$~fields, indexed by~$\ell$, and are assumed  be categorical and the same across all records and entities \cite{steorts14smered,steorts??bayesian}.
$M_\ell$ denotes the number of possible categorical values for the $\ell$th field.

In both models, $X_{ij\ell}$ denotes the
observed value of the $\ell$th field for the $j$th record in the $i$th list,
and $Y_{j'\ell}$ denotes the true value of the $\ell$th field for the $j'$th latent
entity. Then $\Lambda_{ij}$ denotes the latent entity to which the
$j$th record in the $i$th list corresponds, i.e., $X_{ij\ell}$ and $Y_{j'\ell}$
represent the same entity if and only if $\Lambda_{ij}=j'$.
Then $\bm\Lambda$ denotes the $\Lambda_{ij}$ collectively.
Distortion is denoted by $z_{ij\ell}=I(X_{ij\ell}\ne Y_{\Lambda_{ij}\ell})$,
where $I(\cdot)$ denotes the indicator function.
As usual,  $I$ represents the indicator function (e.g., $I(x_{ij\ell}=m)$ is 1 when
the $\ell$th field in record $j$ in file $i$ has the value $m$),
and let $\delta_a$ denote the distribution of a point mass at $a$ (e.g., $\delta_{y_{\Lambda_{ij}\ell}}$). 
The model of \cite{steorts14smered,steorts??bayesian} is: 
\begin{align}
\label{model:cat}
X_{ij\ell}\mid\Lambda_{ij},Y_{\Lambda_{ij}\ell},z_{ij\ell},\bm\theta_\ell&\stackrel{\text{ind}}{\sim}
\begin{cases}
\delta_{Y_{\Lambda_{ij}\ell}}&\text{ if }z_{ij\ell}=0 \notag \\
\text{MN}(1,\bm{\theta}_\ell)&\text{ if }z_{ij\ell}=1
\end{cases}\\
z_{ij\ell}&\stackrel{\text{ind}}{\sim}\text{Bernoulli}(\beta_\ell) \notag \\
Y_{j'\ell}\mid\bm{\theta}_{\ell}&\stackrel{\text{ind}}{\sim}\text{MN}(1,\bm{\theta}_\ell) \notag \\
\bm{\theta}_\ell\stackrel{\text{ind}}{\sim}\text{Dirichlet}(\bm{\mu}_\ell) & \;\; \text{and} \;\;
\beta_\ell\stackrel{\text{ind}}{\sim}\text{Beta}(a_\ell,b_\ell) \notag  \\
\Lambda_{ij} &\stackrel{\text{ind}}{\sim}\text{Uniform}\left(1,\ldots,N\right),
\end{align}
where MN denotes the Multinomial distribution and
$a_\ell,b_\ell, \bm{\mu}_\ell$ are all known. Guidance for the hyper-parameters and a justification of the (discrete) uniform prior are given in \cite{steorts14smered,steorts15entity, steorts??bayesian}.
Model \ref{model:cat} assumes that different records are independent conditional on the deeper variables of the model.  Moreover, it assumes the same conditional independence of different fields for the same record.  
Finally, observe that record linkage and de-duplication are both simply a question of whether $\Lambda_{i_1,j_1}=\Lambda_{i_2,j_2}$, where $i_1\ne i_2$ for record linkage and $i_1=i_2$ for de-duplication. 

\subsection{Empirical Bayesian Record Linkage}
\label{sec:string}
The work of \cite{steorts15entity} 
assumes  fields $1,\ldots,p_s$ are string-valued, while fields $p_s+1,\ldots,p_s+p_c$ are categorical, where $p_s+p_c=p$ is the total number of fields. They assume an empirical Bayesian distribution on the latent parameter. For each $\ell\in\{1,\ldots,p_s+p_c\}$, let $S_\ell$ denote the set of \emph{all} values for the $\ell$th field
that occur anywhere in the data, i.e.,
$S_\ell=\{X_{ij\ell}:1\le i\le k, 1\le j\le n_i\},$
and let $\alpha_\ell(w)$ equal the empirical frequency of value~$w$ in field~$\ell.$
Let $G_\ell$ denote the empirical distribution of the data in the $\ell$th field from all records in all databases combined.  So, if a random variable~$W$ has distribution $G_\ell$, then for every $w\in S_\ell$,
$P(W=w)=\alpha_\ell(w)$.
Hence, let $G_\ell$  be the prior for each latent entity $Y_{j'\ell}$. 
%
The distortion process changes such that
\begin{align*}
P(X_{ij\ell}&=w\mid\Lambda_{ij},Y_{\Lambda_{ij}\ell},z_{ij\ell}) \\
&=\dfrac{\alpha_\ell(w)\,\exp[-c\,d(w,Y_{\Lambda_{ij}\ell})]}{\sum_{w\in S_\ell}\alpha_\ell(w)\,\exp[-c\,d(w,Y_{\Lambda_{ij}\ell})]},
\end{align*}
where $c > 0$ is a fixed normalizing constant corresponding to an arbitary distance metric $d(\cdot,\cdot)$.  Denote this distribution by $F_\ell(Y_{\Lambda_{ij}\ell})$.
The model becomes
\begin{align}
\label{model:string}
X_{ij\ell}\mid \Lambda_{ij},\,Y_{\Lambda_{ij}\ell},\,z_{ij\ell}\;&\stackrel{\text{ind}}{\sim}\begin{cases}\delta(Y_{\Lambda_{ij}\ell})&\text{ if }z_{ij\ell}=0\\F_\ell(Y_{\Lambda_{ij}\ell})&\text{ if }z_{ij\ell}=1, \ell\le p_s\\G_\ell&\text{ if }z_{ij\ell}=1, \ell>p_s\end{cases} \notag \\
Y_{j'\ell}\;&\stackrel{\text{ind}}{\sim}G_\ell \notag \\
z_{ij\ell}\mid\beta_{i\ell}\;&\stackrel{\text{ind}}{\sim}\text{Bernoulli}(\beta_{i\ell}) \notag\\
\beta_{i\ell}\;&\stackrel{\text{ind}}{\sim}\text{Beta}(a,b) \notag \\ 
\Lambda_{ij}\;&\stackrel{\text{ind}}{\sim}\text{Uniform}\left(1,\ldots,N\right),
\end{align}
where all distributions are also independent of each other; assume that $a,b, N$ are assumed known. This framework was shown to work well in applications and simulation studies, however, it was quite sensitive to the choice of the hyperparameters. This method  beat supervised methods, such as random forests when the amount of training data input into the supervised methods was $< 10\%$. 

Figure~\ref{fig:graphicalProcess} contains a graphical representation of models \ref{model:cat}-\ref{model:string}.

\begin{figure}[htbp]
\begin{center}
\includegraphics[width=0.35\textwidth]{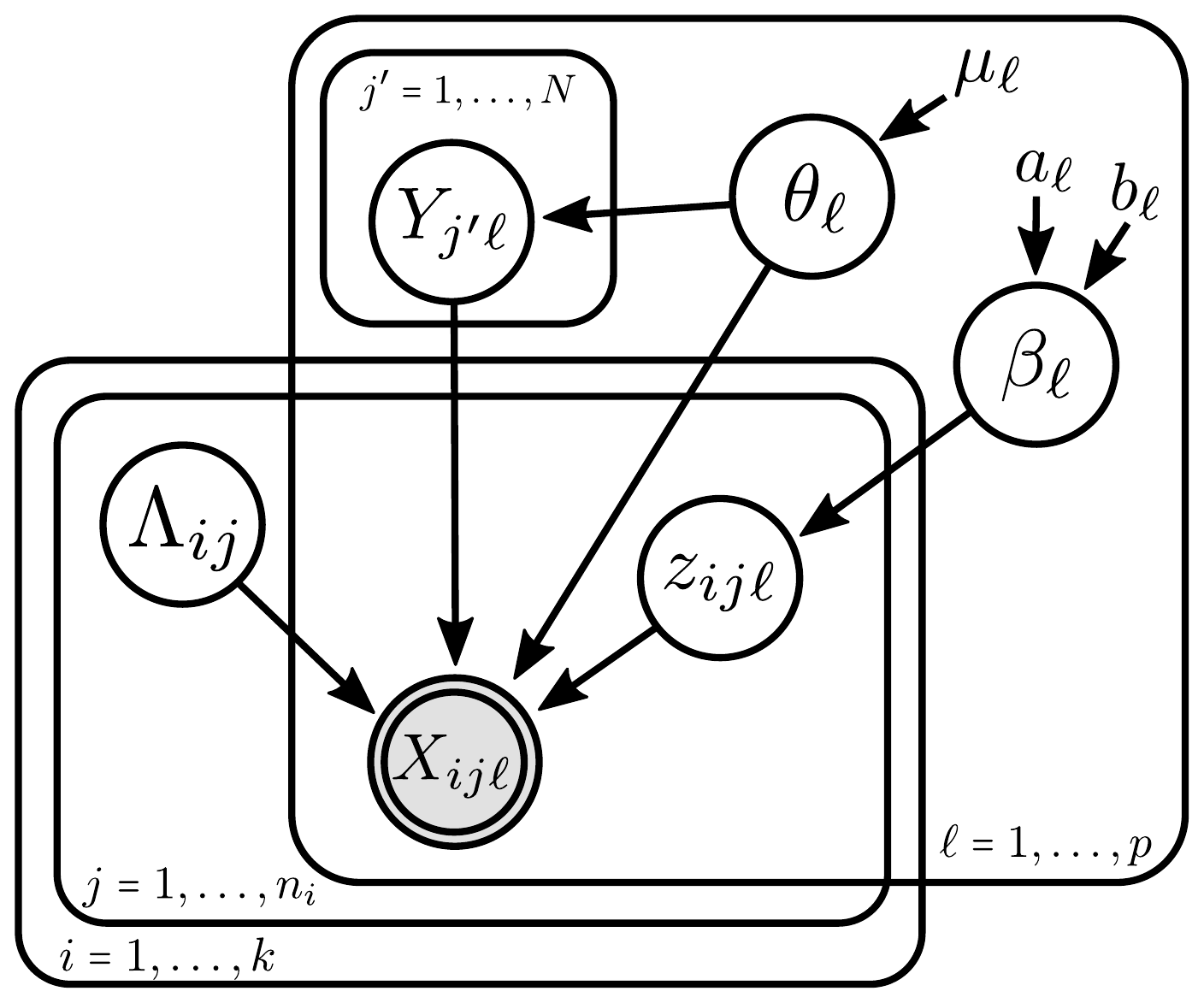}
\caption{Graphical representation of models \ref{model:cat}-\ref{model:string}.}
\label{fig:graphicalProcess}
\end{center}
\end{figure}

%


\section{Performance Bounds of Record Linkage}
\label{sec:properties}

Recall the connection to KL divergence in the sense that for any two distributions $P$ and $Q$, the maximum power for testing $P$ versus $Q$ is $\exp\{-n D_{\text{KL}}(P || Q)\}.$ Hence, a low value of $D_{KL}$ means that we need many samples to distinguish $P$ from $Q.$ A natural question is how  does changing $\bY$ (latent entity) or $\lam$ (linkage structure) change the distribution of $\bX$ (observed records)? We search for both meaningful upper and lower bounds, since an upper bound will say that $P$ and $Q$ are never more than so far apart, whereas a lower bound says how easy it is to tell $P$ and $Q$ apart. Moreover, we investigate how well can we recover $\bY$ (latent entity) and $\lam$ (linkage structure) from $\bX$ (data).

 
Assuming the conditions of \cite{steorts14smered, steorts15entity}, 
let 
 $\mathcal{P} = \left\{f(X\mid \bY, \Lambda_{ij}, \bm{\theta}, \bm{\beta}): 
 \forall \Lambda_{ij} \in \{ 1, \ldots, N \}.\right\}$
We know that $X_1,X_2,\ldots,X_N$ are all independent given $(\bY,\lam, \bm{\theta}, \bm{\beta})$ under both $P, Q \in \mathcal{P}.$ This implies that 
$D_{X_1, X_2, \ldots, X_N} (P \| Q) = \sum_i D_{X_i}(P \| Q).$
  We first provide a theorem under the model of \cite{steorts14smered}, which assumes categorical data and a hierarchical model. In Theorem~\ref{theorem:cat}, we find the minimum probability of getting a latent entity wrong. Moreover, we are able to say that with growing distortion of the data, there is no difference between two latent entities and the bound becomes infinite and non-informative in this case. 
Next, under the model of \cite{steorts15entity} we provide a general theorem, which assumes both categorical and noisy text data. This theorem provides an upper bound on the KL divergence of arbitrary distributions $P$ and $Q$.

\subsection{Kullback-Leibler Divergence under Categorical Data} \label{sec:cat}
We use Fano's inequality \cite{prelov2008} to bound the probability of misclassification, as a function of the KL divergence between $P$ and $Q$, as defined in the previous section.
Assume that $\lam$ and $\hat{\lam}$ are two distinct linkage structures that correspond to the same latent entity $(\by).$ 
Let $r+1$ be the cardinality of $\mathcal{P}$, i.e.\ $r+1 = N$.

\begin{theorem}
\label{theorem:cat}
This result finds an upper bound on the KL divergence and a
 lower bound for the  probability that model \ref{model:cat} gets the linkage structure incorrect. 
Let
$\gamma = \max_{\Lambda_{ij} \neq \Lambda'_{ij}}
2\sum_{ij\ell} I(Y_{\Lambda_{ij}\ell} \neq Y_{\Lambda'_{ij}\ell}) (1-\beta_{\ell}) \ln \left \{
\dfrac{1}{
\min_m \theta_{\ell m} \beta_{\ell}} \right\}.$
 \begin{enumerate}
\item[i)] The KL divergence is bounded above by $\gamma.$ That is,
$D_X(P || Q) \leq \gamma \enskip \forall P, Q \in \mathcal{P}$.
\item[ii)] The minimum probability of getting a latent entity wrong is
$Pr( {\Lambda}_{ij} \ne \Lambda^\prime_{ij}) \geq 1 - \dfrac{ \gamma + \ln 2}{\ln r}, \enskip \forall i,j$
\end{enumerate}
\end{theorem}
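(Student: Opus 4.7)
The plan is to establish (i) by bounding the per-field KL divergence in closed form via the mixture structure of the observation model, and then to obtain (ii) as a direct consequence of Fano's inequality applied to the uniform prior on the linkage label.

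For (i), I would begin with the conditional-independence decomposition already noted just before the theorem: under both $P$ and $Q$ the observations $X_{ij\ell}$ are mutually independent given $(\bY,\lam,\bm\theta,\bm\beta)$, so $D_X(P\|Q) = \sum_{i,j,\ell} D_{X_{ij\ell}}(P\|Q)$. Model \ref{model:cat} makes the conditional law of $X_{ij\ell}$ a two-component mixture: mass $1-\beta_\ell$ on the point $Y_{\Lambda_{ij}\ell}$ and mass $\beta_\ell$ on $\mathrm{MN}(1,\bm\theta_\ell)$. Writing $y := Y_{\Lambda_{ij}\ell}$ and $y' := Y_{\Lambda'_{ij}\ell}$, the two mixtures coincide when $y=y'$, which isolates the indicator $I(Y_{\Lambda_{ij}\ell}\ne Y_{\Lambda'_{ij}\ell})$ in the final expression for $\gamma$. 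On disagreeing terms only $w\in\{y,y'\}$ contributes to the KL, and I would bound the log-ratio at $w=y$ using $P(y)/Q(y) = 1 + (1-\beta_\ell)/(\beta_\ell\theta_{\ell y}) \le 1/(\beta_\ell\min_m\theta_{\ell m})$ together with $P(y) \le 1$, so that the mixing weight $(1-\beta_\ell)$ emerges outside the log while the logarithm collapses to $\ln\{1/(\min_m\theta_{\ell m}\beta_\ell)\}$. Summing over $(i,j,\ell)$ and maximising over disagreeing linkage structures yields $\gamma$, with the factor of $2$ absorbing a symmetrisation step that jointly controls the $w=y$ and $w=y'$ contributions (equivalently, bounding $D_X(P\|Q)$ by the symmetrised divergence $D_X(P\|Q)+D_X(Q\|P)$).

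For (ii), I would invoke Fano's inequality with $\Lambda_{ij}$ supported on $\{1,\dots,N\}$, where $N=r+1$. Writing $P_e := \Pr(\Lambda_{ij}\ne\Lambda'_{ij})$, Fano gives $H(\Lambda_{ij}\mid\bX) \le \ln 2 + P_e\ln r$. The uniform prior gives $H(\Lambda_{ij}) = \ln(r+1) \ge \ln r$, and the capacity-style bound $I(\Lambda_{ij};\bX) \le \max_{P,Q\in\mathcal{P}}D_X(P\|Q) \le \gamma$ --- coming directly from part (i) --- yields $H(\Lambda_{ij}\mid\bX) \ge \ln r - \gamma$. Rearranging gives $P_e \ge 1 - (\gamma+\ln 2)/\ln r$.

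The main obstacle is the per-field bound in (i): both the pointwise ratio at $w=y$ and the (negative) contribution from $w=y'$ have to be controlled so that the prefactor $(1-\beta_\ell)$ ends up \emph{outside} the logarithm --- reflecting that no information is gained from a field that was never distorted --- while $\ln\{1/(\min_m\theta_{\ell m}\beta_\ell)\}$ appears \emph{inside}. Pinning down exactly where the factor of $2$ is tight, rather than merely convenient, is the delicate step; the independence decomposition, the maximisation over $\lam$, and the appeal to Fano are all routine once the per-field bound is in hand.
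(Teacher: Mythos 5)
Your proposal is correct and arrives at exactly the paper's constant, but the key per-field step is obtained by a different device than the paper uses. The paper computes the $L_1$ distance exactly: when $Y_{\Lambda_{ij}\ell}\neq Y_{\Lambda^\prime_{ij}\ell}$ the two conditional laws differ only at the two cells $y=Y_{\Lambda_{ij}\ell}$ and $y'=Y_{\Lambda^\prime_{ij}\ell}$, each by $(1-\beta_\ell)$, so $\|P-Q\|_1=2(1-\beta_\ell)$, and it then invokes the reverse Pinsker inequality of Berend et al., $D(P\|Q)\le\|P-Q\|_1\ln\{(\min Q)^{-1}\}$ with $\min Q\ge\beta_\ell\min_m\theta_{\ell m}$. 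You instead bound $D(P\|Q)$ by the symmetrised divergence $\sum_m\big(P(m)-Q(m)\big)\ln\big[P(m)/Q(m)\big]$, in which only $m\in\{y,y'\}$ survive, $|P(m)-Q(m)|=(1-\beta_\ell)$ there, and each log-ratio is bounded by $\ln\{1/(\beta_\ell\min_m\theta_{\ell m})\}$; this is a self-contained re-derivation of the reverse-Pinsker step in this two-cell situation, so your factor of $2$ is not extra slack but exactly the paper's $\|P-Q\|_1$. What each route buys: the paper's is shorter once the cited lemma is granted, while yours is elementary and makes transparent where the $2$ and the $(1-\beta_\ell)$ come from. One caveat: your intermediate remark that ``$P(y)\le 1$'' makes $(1-\beta_\ell)$ emerge outside the logarithm is not right as stated --- a one-sided bound with $P(y)\le 1$ only yields prefactor $P(y)=(1-\beta_\ell)+\beta_\ell\theta_{\ell y}$ (or $1$), not $(1-\beta_\ell)$; the $(1-\beta_\ell)$ genuinely comes from the differences $P(m)-Q(m)$, i.e., from the symmetrisation (equivalently the $L_1$ computation), which your parenthetical correctly identifies as the real mechanism. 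Your Fano step for (ii) --- uniform prior on $\{1,\dots,N\}$ with $N=r+1$, $I(\Lambda_{ij};\bX)\le\max_{P,Q\in\mathcal{P}}D_X(P\|Q)\le\gamma$ by convexity of KL in its second argument, and $H(\Lambda_{ij}\mid\bX)\le\ln 2+P_e\ln r$ --- is the standard argument and is in fact spelled out more explicitly than in the paper, which simply cites Fano's inequality.
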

That is, as the latent entities become more distinct, $\gamma$ increases. On the other hand, as the latent entities become more similar, $\gamma \rightarrow 0.$
%
{

\begin{rem}
Consider Theorem \ref{theorem:cat} (i). Suppose $\beta_{\ell} \rightarrow 1.$ Then $D_{\bX} \geq 0.$ If instead $\beta_{\ell} \rightarrow 0,$ then $D_{\bX} \geq 1.$ The lower bound is only informative when $\beta_{\ell} \rightarrow 0.$ We have more information when the latent entities are separated.
\end{rem}
\begin{proof}
To show this, we simply apply Pinsker's  inequality, where
for all $P,Q \in \mathcal{P}$:
$
D(P \| Q) \geq 2||P-Q||^2_1 
\implies \notag \\
D(P \| Q) \geq I(Y_{\Lambda_{ij}\ell} \neq Y_{\Lambda^\prime_{ij}\ell}) (1-\beta_{\ell})^2 
\nonumber \implies  \notag \\
 D_{\bX}(P \| Q)  \geq \sum_{ij\ell} I(Y_{\Lambda_{ij}\ell} \neq Y_{\Lambda^\prime_{ij}\ell})  (1-\beta_{\ell})^2.
$
\end{proof}
}
\begin{proof}
We assume the model of \cite{steorts14smered,steorts??bayesian}, which assumes that data is categorical.  We assume model \ref{model:cat} holds in section \ref{sec:categorical}. We first prove (i). Consider $f(X\mid \bY, \lam, \bm{\theta}, \bm{\beta}). $ Then
\begin{align}
\label{cat:like}
&Pr(X_{ij\ell} =m \mid \bY, \lam, \bm{\theta}, \bm{\beta})  \\
&= 1(Y_{\Lambda_{ij}\ell} = m) (1-\beta_{\ell}) 
+ \theta_{\ell m} \beta_{\ell}.\notag 
\end{align}
It follows from equation~\ref{cat:like} that 
\begin{align*}
D_{X_{ij\ell}}(P\| Q)
&= \sum_{m=1}^{M_\ell}  
I(Y_{\Lambda_{ij}\ell} = m) (1-\beta_{\ell}) +  \theta_{\ell m} \beta_{\ell}\} \\
& \times \log\left[
\frac{I(Y_{\Lambda_{ij}\ell} = m) (1-\beta_{\ell}) +  \theta_{\ell m} \beta_{\ell}}
{
I(Y_{\Lambda^\prime_{ij}\ell} = m) (1-\beta_{\ell}) +  \theta_{\ell m} \beta_{\ell}
}
\right]. 
\end{align*}
It directly follows that 
\begin{align*}
D_{\bX} (P\| Q) 
&=
\sum_{ij\ell m} \left\{
I(Y_{\Lambda_{ij}\ell} = m) (1-\beta_{\ell}) +  \theta_{\ell m} \beta_{\ell}\} \right. \\
&\times \left. \log\left[
\frac{I(Y_{\Lambda_{ij}\ell} = m) (1-\beta_{\ell}) +  \theta_{\ell m} \beta_{\ell}}
{
I(Y_{\Lambda^\prime_{ij}\ell} = m) (1-\beta_{\ell}) +  \theta_{\ell m} \beta_{\ell}
}
\right]
\right\}.
\end{align*}
\textcolor{black}{If $Y_{\Lambda_{ij}\ell} \neq Y_{\Lambda^\prime_{ij}\ell}$}, then
\begin{align}
\| P-Q\|_1 
&= \sum_m \left | I(Y_{\Lambda_{ij}\ell} = m) (1-\beta_{\ell})  
+  \theta_{\ell m} \beta_{\ell} \right. \notag \\
& \qquad \left. -I(Y_{\Lambda^\prime_{ij}\ell} = m) (1-\beta_{\ell}) -  \theta_{\ell m} \beta_{\ell}
\right |  \notag \\
&= 2(1-\beta_\ell).
\label{eqn:pq}
\end{align}
Equation \ref{eqn:pq} holds 
since $P(m) = Q(m)$ unless $m = Y_{\Lambda_{ij}\ell}$ or $m = Y_{\Lambda^\prime_{ij}\ell}.$
\textcolor{black}{If $Y_{\Lambda_{ij}\ell} = Y_{\Lambda^\prime_{ij}\ell}$, then $P=Q$ and $\|P-Q\|_1 = 0$.}
The reverse Pinsker inequality of \cite{berend_2014} relates the KL
divergence to the $L_1$ norm in the following way:
$D(P \| Q) \leq \| P-Q\|_1 \ln \{ (\min Q)^{-1}\}.$ Using this, we find that \textcolor{black}{(if $Y_{\Lambda_{ij}\ell} \neq Y_{\Lambda^\prime_{ij}\ell}$)}, then
\begin{align*}
D(P \| Q)  &\leq 2(1-\beta_{\ell}) \\
\qquad &\times \ln \left \{
\dfrac{1}{
\min_m I(Y_{\Lambda^\prime_{ij}\ell} = m) (1-\beta_{\ell}) +  \theta_{\ell m} \beta_{\ell}
}
\right\}  \\
&\leq 
2(1-\beta_{\ell}) \ln \left \{
\dfrac{1}{
\min_m \theta_{\ell m} \beta_{\ell}
}
\right\}.
\end{align*}
Hence, 
\begin{align*}
\max_{P,Q \in \mathcal{P}}D_{\bX}(P \| Q) &\leq
\max_{\Lambda_{ij} \neq \Lambda^\prime_{ij}} 2\sum_{ij\ell} I(Y_{\Lambda_{ij}\ell} \neq Y_{\Lambda^\prime_{ij}\ell})(1-\beta_{\ell}) \\
& \qquad \times \ln \left \{\dfrac{1}{\min_m \theta_{\ell m} \beta_{\ell}} \right\}
:=\gamma.
\end{align*}
This proves (i). 
We now prove (ii). Using Fano's inequality \cite{prelov2008}, the minimum probability of getting a latent entity wrong is
$Pr( \Lambda_{ij} \ne \Lambda^\prime_{ij}) \geq 1 - \frac{ \gamma + \ln 2}{\ln r},$ where $r+1$ is the cardinality of $\mathcal{P}$, i.e.\ $r+1=N$. As the latent entities become more distinct, $\gamma$ increases. On the other hand, as the latent entities become more similar, $\gamma \rightarrow 0.$
\end{proof}

\subsection{KL Divergence Bounds for String and Categorical Data} \label{sec:str}
We now consider $P$ and $Q$ under \cite{steorts15entity} for both categorical and noisy string data. Recall that  $\beta_\ell$ tunes the amount of distortion as defined in equation \ref{model:string}. 
Recall that $d(\cdot, \cdot)$ denotes any arbitrary distance metric between an observed string and a latent string as seen in equation \ref{model:string}, and $c > 0$ is a fixed normalizing constant corresponding to the distance metric $d.$

In Theorem \ref{theorem:string}, for any distinct linkage structures,  the minimum probability of getting a latent entity wrong is governed by a lower bound, which is growing at a rate $c \rightarrow \infty$ that is determined by the moment generating function of the distances between an observed string in data and a latent string. 
\begin{theorem}
\label{theorem:string}
Assume data $\bX,$ and distributions $P,Q \in \mathcal{P}$ defined in section \ref{sec:properties}.  Assume two distinct linkage structures, denoted by $Y_{\Lambda_{ij}\ell}, Y_{\Lambda^\prime_{ij}\ell}.$
\begin{enumerate}
\item [i)] There is an upper bound on the KL divergence between any $P,Q \in \mathcal{P}$ 
given by $\kappa,$ that is $D_X(P||Q) \leq \kappa.$
\item [ii)] $Pr(\Lambda_{ij} \neq \Lambda^\prime_{ij}) \geq 1- \dfrac{\kappa + \ln 2}{\ln r},$
where 
\begin{align*}
\kappa &= \max_{\Lambda_{ij} \neq \Lambda^\prime_{ij}}\bigg[
2 \sum_{\ell} (1-\beta_\ell) I(Y_{\Lambda_{ij}\ell} \neq Y_{\Lambda^\prime_{ij}\ell}) 
  +  \\
& \qquad \sum_{\ell m}  I(Y_{\Lambda_{ij}\ell} \neq Y_{\Lambda^\prime_{ij}\ell}) 
 \left(
1 - e^{-c d(Y_{\Lambda_{ij}\ell}, Y_{\Lambda^\prime_{ij}\ell})}
\right) \\
&\times E[ e^{-c  d(m, Y_{\Lambda_{ij}\ell})} ] \bigg]\ln\{ (\min Q)^{-1} \}
\end{align*}
and $r+1$ is the cardinality of $\mathcal{P}$.
\end{enumerate}
\end{theorem}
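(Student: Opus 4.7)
The proof strategy parallels that of Theorem~\ref{theorem:cat}, with the essential new difficulty being the string-field distortion kernel $F_\ell$, which itself depends on the latent value. First, I would write out $P(X_{ij\ell}=w\mid \bY,\lam,\bm{\beta})$ under model~\ref{model:string} by marginalizing over $z_{ij\ell}$, yielding a mixture of the form $(1-\beta_{i\ell})\,\delta_{Y_{\Lambda_{ij}\ell}}(w) + \beta_{i\ell}\,H_\ell(w; Y_{\Lambda_{ij}\ell})$, where $H_\ell = F_\ell(Y_{\Lambda_{ij}\ell})$ for string fields ($\ell\le p_s$) and $H_\ell = G_\ell$ for categorical fields ($\ell>p_s$). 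Conditional independence of records and fields (as already invoked in section~\ref{sec:properties}) decomposes the total KL divergence as $D_{\bX}(P\|Q)=\sum_{ij\ell} D_{X_{ij\ell}}(P\|Q)$, reducing the problem to a per-field analysis.

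Second, as in Theorem~\ref{theorem:cat}, I would bound each per-field term via the reverse Pinsker inequality of \cite{berend_2014}, $D(P\|Q)\le \|P-Q\|_1 \ln\{(\min Q)^{-1}\}$, so it suffices to control $\|P-Q\|_1$. Fields with $Y_{\Lambda_{ij}\ell}=Y_{\Lambda'_{ij}\ell}$ contribute zero, so attention restricts to fields where the two latent values disagree. For categorical fields ($\ell>p_s$) the two distortion components are both $G_\ell$ and cancel, leaving the point-mass contribution $2(1-\beta_{i\ell})$ exactly as in the categorical theorem. For string fields the point-mass part again contributes $2(1-\beta_{i\ell})$, but the kernel parts $F_\ell(Y_{\Lambda_{ij}\ell})$ and $F_\ell(Y_{\Lambda'_{ij}\ell})$ no longer cancel and produce the additional term in $\kappa$.

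Third, and this is the main obstacle, I need to bound $\|F_\ell(y)-F_\ell(y')\|_1$ and obtain precisely the form $(1-e^{-c d(y,y')})\, E[e^{-c d(m,y)}]$ appearing in $\kappa$. Writing $F_\ell(y)(m)=\alpha_\ell(m)e^{-c d(m,y)}/Z(y)$ with $Z(y)=\sum_m \alpha_\ell(m) e^{-c d(m,y)}$, I would use monotonicity of $e^{-c\cdot}$ together with the metric triangle inequality $|d(m,y)-d(m,y')|\le d(y,y')$ to bound the pointwise difference $|e^{-c d(m,y)}-e^{-c d(m,y')}|$ by $e^{-c d(m,y)}(1-e^{-c d(y,y')})$ (up to symmetrization in $y,y'$). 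Summing against $\alpha_\ell(m)$ and handling the common normalizer then yields the $\alpha_\ell$-weighted expectation $E[e^{-c d(m,Y_{\Lambda_{ij}\ell})}]$ that appears in the statement. Matching the exact algebraic form in $\kappa$ will require care about where the normalizer is absorbed and about sign conventions in the triangle-inequality step.

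Finally, I would sum the resulting per-field bounds over $i,j,\ell$, indicator-gated by $I(Y_{\Lambda_{ij}\ell}\neq Y_{\Lambda'_{ij}\ell})$, multiply by the $\ln\{(\min Q)^{-1}\}$ factor from reverse Pinsker, and take the maximum over distinct $\Lambda_{ij}\neq\Lambda'_{ij}$ to obtain $D_X(P\|Q)\le \kappa$, proving (i). Part (ii) then follows immediately by Fano's inequality \cite{prelov2008}, exactly as in Theorem~\ref{theorem:cat}(ii), using $r+1 = |\mathcal{P}| = N$.
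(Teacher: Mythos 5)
Your proposal follows essentially the same route as the paper's own proof in the appendix: marginalize over $z_{ij\ell}$, decompose the KL divergence over records and fields, bound each per-field $\|P-Q\|_1$ by $2(1-\beta_\ell)$ plus a kernel term controlled via the triangle inequality on $d$ and monotonicity of $e^{-c(\cdot)}$ (producing the factor $\left(1-e^{-c\,d(Y_{\Lambda_{ij}\ell},Y_{\Lambda^\prime_{ij}\ell})}\right)E[e^{-c\,d(m,Y_{\Lambda_{ij}\ell})}]$), apply the reverse Pinsker inequality of \cite{berend_2014}, sum and maximize over distinct linkage structures, and conclude part (ii) with Fano's inequality. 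If anything you are slightly more careful than the paper, which handles the normalizer of $F_\ell$ only up to proportionality and does not separately treat the categorical fields, so the caveats you flag are exactly the imprecisions the paper itself leaves implicit.
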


The proof of Theorem \ref{theorem:string} can be found in the Supplementary Material.

\section{Kolchin Partition Models}
\label{sec:gibbs}
{
The models in sections \ref{sec:categorical} and \ref{sec:string} assume discrete uniform priors on the linkage structure.  We extend this to a more general class of models from Bayesian nonparametrics known as KP models \cite{pitman}. Special cases include the work of \cite{zanella2016microclustering, liseo_2011, sadinle_2014}. We provide notation, examples, and then provide a general theorem. }

The prior structure on $\lam$ can instead be viewed on the set of labelings. Specifically, let  $z$ denote the partition of the observed records determined by $\lam$, and $\mathcal{B}$  denote the set containing all the possible partitions of the $r$ observed records. Then a distribution on the sample labels $\lam$ induces a distribution on $\mathcal{B}.$ That is, matches and duplicates are completely specified given the knowledge of $z$, which is invariant with respect to the labelings of the partition blocks. 


\subsection{Special Cases of KP Models}
We give some special cases of KP models that extend the class of models that we consider.  


\subsection{A Uniform Prior on the Label Space}

Let $z(\lam)$ denote the partition identified by $\lam$ and let $n(z)$ denote the number of distinct entities considering all the observed records, i.e. the number of blocks of the partition. One has $$N!/(N-n(z))!=(N)_{n(z)}$$ different labelings which identify the same partition $z(\lam)$. Then  
$$ P(z(\lam)=z)=\left(\frac{1}{N} \right)^r \frac{N!}{(N-n(z) )!} \quad \forall z \in \mathcal{B} .$$
Note also that $N^r=\sum_{n=0}^r (N)_{n} S(r,n)$ where $S(r,n)$ is   the Stirling number of second kind,
that is the number of possible partitions of the $r$ records into $n$ non empty sets, which implies
\begin{equation}
P(z(\lam)=z)=\frac{(N)_{n}}{N^r}\quad \forall z \in \mathcal{B} \label{gibbs1}
\end{equation}
where $n=n(z)$. Following \cite{pitman}, equation (\ref{gibbs1}) is a special case of a KP model. 
Moreover, the distribution of the number of distinct elements $n(z)$ is given by
$P(n(z)=n)= \dfrac{(N)_{n} S(r,n)}{ N^r}.$ (A similar prior was considered in \cite{liseo_2011}).

\subsection{The Uniform Prior on the Partition Space}
\label{pyprior}
Assuming one database, \cite{sadinle_2014}  focused mainly on the partitions of the $r$ records induced by $\lam$ and proposed a flat prior on the partition space, that is a prior which assigns equal probability to each different partition of the $r$ observed records. Assume that
$$p(\lam)=\dfrac{1}{B_r \, (N)_{n(z)}} $$
where $B_r=\sum_{n=0}^r S(r,n)$ is the $r$-th Bell number. In terms of partitions, the prior used by \cite{sadinle_2014} can be written as 
$p(z(\lam)=z)=\frac{1}{B_r}$ and
$p(n(z)=n)=\frac{S(r,n)} {B_r}$.
This prior is also a special case of a KP model.

Moreover, the discrete uniform priors of \cite{steorts14smered, steorts??bayesian, steorts15entity} can also be represented easily as KP models. We refer to \cite{steorts??bayesian} for details.

We now provide a general theorem, which gives a relationship regarding priors (or partitions or blocks) of KP models to our KL divergence bounds. Suppose the prior on the linkage structure can be represented as a KP model \cite{pitman}. Then a wide class of priors is able to be considered and compared. 
\begin{theorem}
\label{thm-gibbs}
Consider model \ref{model:cat} or \ref{model:string}. Then the error bounds behave like the corresponding bounds in Theorem \ref{theorem:cat} or \ref{theorem:string}.
\end{theorem}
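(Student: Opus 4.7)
The plan is to observe that the KL divergence bounds in Theorems \ref{theorem:cat} and \ref{theorem:string} depend only on the sampling distribution $f(\bX \mid \bY, \lam, \bm{\theta}, \bm{\beta})$, and not on the prior placed on $\lam$. Since both the categorical model \ref{model:cat} and the empirical-Bayes model \ref{model:string} factorize as
\[
p(\bX, \bY, \lam, \bm{\theta}, \bm{\beta}) = f(\bX \mid \bY, \lam, \bm{\theta}, \bm{\beta}) \, p(\bY)\, p(\bm{\theta})\, p(\bm{\beta})\, p(\lam),
\]
replacing the uniform $p(\lam)$ with a KP prior changes only the last factor. The conditional law of $\bX$ given $(\bY,\lam, \bm{\theta}, \bm{\beta})$ is unaltered, and every step of the earlier proofs (Pinsker / reverse-Pinsker applied pointwise, followed by summation over $(i,j,\ell)$) therefore goes through verbatim to yield the same constants $\gamma$ and $\kappa$.

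The only nontrivial piece is part (ii), because Fano's inequality uses the cardinality $r+1$ of $\mathcal{P}$. The first step is to reparameterize: under a KP prior, the natural hypothesis class is not the labeling $\lam$ itself but the induced partition $z(\lam) \in \mathcal{B}$, since the likelihood is invariant under relabeling of blocks. Thus I would restate the misclassification event in terms of the partition, i.e.\ $\Pr(z(\lam) \neq z(\lam'))$. For the uniform-label prior, $|\mathcal{B}|$ is counted by Bell-type sums ($\sum_n (N)_n S(r,n)/N^r$), and for the Sadinle flat-partition prior it is $B_r$. In each case, replacing $r+1=N$ in the Fano bound by the appropriate cardinality of the KP-induced support of $z(\lam)$ preserves the form of the lower bound, yielding
\[
\Pr(z(\lam) \neq z(\lam')) \geq 1 - \frac{\gamma_\star + \ln 2}{\ln r_\star},
\]
with $\gamma_\star \in \{\gamma,\kappa\}$ and $r_\star + 1$ the cardinality of the relevant partition support.

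The key steps in order are: (a) verify that $f(\bX \mid \bY, \lam, \bm{\theta}, \bm{\beta})$ is identical in the uniform and KP settings; (b) conclude that $D_{\bX}(P\|Q)\le \gamma$ (resp.\ $\kappa$) holds for every $P,Q \in \mathcal{P}$ regardless of the prior; (c) recast the hypothesis class in terms of partitions to justify a Fano bound on $\Pr(z(\lam)\neq z(\lam'))$; and (d) identify $r_\star$ from the specific KP representation (uniform labels, uniform partitions, or the generic KP distribution of \cite{pitman}) and substitute it into Fano's inequality.

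The main obstacle is step (c)–(d): being careful about what ``getting a latent entity wrong'' means when the prior is exchangeable over block labels, and choosing the cardinality used in Fano's inequality consistently. A uniform bound over $\mathcal{P}$ is safe, but to get a tight statement one must restrict to the partitions charged with positive mass by the KP prior, and verify that the Fano reduction (which is information-theoretic and depends only on the marginal $p(\bX)$ and the support of the hypothesis) is insensitive to re-weighting within that support.
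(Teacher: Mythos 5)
Your proposal is correct and uses essentially the same argument as the paper: the KL bounds in Theorems \ref{theorem:cat} and \ref{theorem:string} are derived purely from the likelihood $f(\bX \mid \bY, \lam, \bm{\theta}, \bm{\beta})$ and are agnostic to the form of the prior on $\lam$, so substituting any KP prior leaves them intact. Your extra care in steps (c)--(d) about recasting the Fano hypothesis class in terms of partitions and adjusting the cardinality $r_\star$ goes beyond the paper, whose proof is a two-line remark that simply reuses the existing bounds with $r+1=N$ unchanged; that refinement is sound but not needed for the statement as given.
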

\begin{proof}
The results directly follows from the proofs of Theorems \ref{theorem:cat}, \ref{theorem:string} and the representation of KP models \cite{pitman}.  Specifically, all bounds in Theorems \ref{theorem:cat} and \ref{theorem:string} depend upon the linkage structure $\lam$, which in the proofs, is agnostic to its form. 
 \end{proof}

\subsection{Microclustering and Record Linkage}
There has been early work in Bayesian nonparametrics to push forward record linkage. The work of \cite{miller15microclustering, zanella2016microclustering} recently pointed out that most clustering tasks assume the cluster sizes grow linearly with the number of the data points. Such examples include infinitely exchangeable clustering models, including finite mixture models, Dirichlet process (DP) mixture models, and Pitman--Yor process (PYP) mixture models. However, in record linkage, such an assumption is undesirable since linkage methods require models that yield clusters whose sizes grow sublinearly with the total number of data points (records). Due to this, \cite{zanella2016microclustering}  defined the microclustering property as well as a new model exhibiting such growth, where their models outperformed or performed as well as the PYP and DP in terms of standard record linkage evaluation metrics on data for official statistics, medical data, and human rights data. Furthermore, the authors proved that one of their models satisfies the microclustering property under very weak assumptions. We refer to their paper for further details.

One insight of this paper was the fact that their class of microclustering models considered can always be written as 
a KP model. Combining this clustering approach with the likelihood in equation \ref{model:cat} or \ref{model:string} immediately allows one to perform record linkage inference. Furthermore, Theorems \ref{theorem:cat} and \ref{theorem:string} are immediately satisfied since the prior on the linkage structure can be represented as a KP model.

\section{Simulation Study and Discussion}

\label{sec:experiments}

We consider how the bounds in Sections \ref{sec:cat} and \ref{sec:str} hold for two simulated experiments. In our experiments (\textbf{Experiment I} and \textbf{Experiment II}), synthetic categorical data are generated according to either model \ref{model:cat} or \ref{model:string} using the parameters shown in Table \ref{table:params} and \ref{table:params-str}, respectively. In order to 
consider a realistic set of strings for $S$, we consider the set of 20 most popular female baby names from 2014, according to the United States Census. Then for the distance $d$, we consider the generalized Levenshtein edit distance.

We then generate both categorical and string records according to either model \ref{model:cat} or \ref{model:string}. For each experiment, we vary exactly one of the parameters to demonstrate its impact of the linkage error rate $Pr( (\hat{\Lambda}_{ij}, \bY) \ne (\Lambda_{ij}, \bY))$. We choose the other values such that the performance is neither extremely low nor extremely high. We set the distortion parameter $\beta_\ell$ to the same value for each $\ell$, i.e.\ $\beta_\ell = 0.6$ denotes a distortion probability of 0.6 for every field. $\beta_\ell = $ 0.0 to 1.0 means we started with $\beta_\ell = 0$ for all $\ell$ and swept the values until $\beta_\ell = 1$ for all $\ell$. Recall $p$ is the number of fields, and thus the maximum value of $\ell$. We also set each $\theta_{\ell m}$ to the same value, i.e.\ $\theta_{\ell m} = 0.1$ denotes $\theta_{\ell m} = 0.1$ for all $\ell$ and all $m$. This further implies each field $\ell$ takes on exactly $M_\ell = 1/\theta_{\ell m}$ values in order for $\theta_\ell$ to be a valid probability distribution.

\begin{table}
\begin{center}
  \begin{tabular}{ l c c c c}
    Experiment & $N$ & $\beta_\ell $ & $p = p_c$  & $\theta_{\ell m}$ \\ \hline
    Fig. 1(a) & 10 to 500 & 0.6 & 3 & 0.1 \\
    Fig. 1(b) & 100  & 0 to 1 & 3 & 0.1 \\
    Fig. 1(c) & 100  & 0.6 & 1 to 8& 0.25 \\
    Fig. 1(d) & 100  & 0.8  & 5& $\frac{1}{46} \text{ to } 1$
  \end{tabular}
  \caption{Categorical Experiments}
  \label{table:params}
  \end{center}
\end{table}

\begin{table}
\begin{center}
  \begin{tabular}{ l c c c c c}
    Experiment & $N$ & $\beta_\ell $ & $p = p_s$ & $c$ \\ \hline
    Fig. 2(a) & 100 to 500 & 0.6  & 1 &  1.0 \\
    Fig. 2(b) & 100  & 0.2 to 1 & 1  & 1.0 \\
    Fig. 2(c) & 100  & 0.6 & 1 to 10  & 1.0 \\
    Fig. 2(d) & 100  & 0.6 & 1 & 0 to 2
  \end{tabular}
  \caption{String Experiments}
  \label{table:params-str}
  \end{center}
\end{table}

We compare the bound in Theorem \ref{theorem:cat} to two record linkage algorithms \cite{steorts14smered, steorts??bayesian, steorts15entity}. The first is an exact sampler, which samples directly from $Pr(\Lambda_{ij} | X_{ij}, \bY, \bz)$. The second is a more realistic Gibbs sampler with empirically motivated priors proposed by \cite{steorts15entity}. We run the Gibbs sampler for 10,000 iterations on all experiments to ensure proper mixing. There is some difficulty in comparing $\Lambda$ to $\hat \Lambda$, as there are multiple equally correct modes due to arbitrary re-orderings of the latent individuals $\hat \bY$ and corresponding linkage structure $\hat \Lambda$. Even though the Gibbs sampler may infer the correct latent individuals $\bY$ and linkage structure, because the ordering is arbitrary, it is unlikely that $\Lambda = \hat \Lambda$. To avoid such an issue of label switching, we fix $\hat \bY$ during the sampling process. 

Specifically, we compare the bound to the empirical error rate of the Gibbs sampler proposed by \cite{steorts15entity}. In order to compute the empirical probability $Pr( \hat{\Lambda}_{ij} \ne \Lambda_{ij})$, we hold $\bY$ fixed during Gibbs sampling to ensure errors in $\hat \lam$ are not due to arbitrary changes in the ordering of the labels of $\bY$. In addition, we compare the linkage error rate to an exact sampler, which samples directly from $Pr(\lam | X, \bY, \bz)$.  

\paragraph{Results of Experiment I}
In Figures \ref{fig:experiments} (a)-(d) we vary the number of records $N$, distortion parameter $\beta$, number of fields $p$ and number of values each field takes $M_\ell$, respectively. The empirical results demonstrate Theorem \ref{theorem:cat} captures the dependence between the error rate and the all relevant latent parameters $\theta$, $N$ and $\beta$. Specifically, linking records becomes more difficult as $N$ increases, the distortion parameter $\beta$ increases, the number of fields $p$ decreases or the number of values each field can take $M_\ell$ decreases. The bound nicely captures the logarithmic increase in error with respect to $N$ in Figure \ref{fig:experiments} (a), which gives hope for linking records in very large databases. Other terms appear to be $\bar O(n)$ when not near extreme error values, implying low noise and a larger feature space are essential to performing high quality record linkage.

\begin{figure}
  \begin{minipage}[t]{1\textwidth}
    \hspace{-4mm}
    \includegraphics[width=0.28\textwidth]{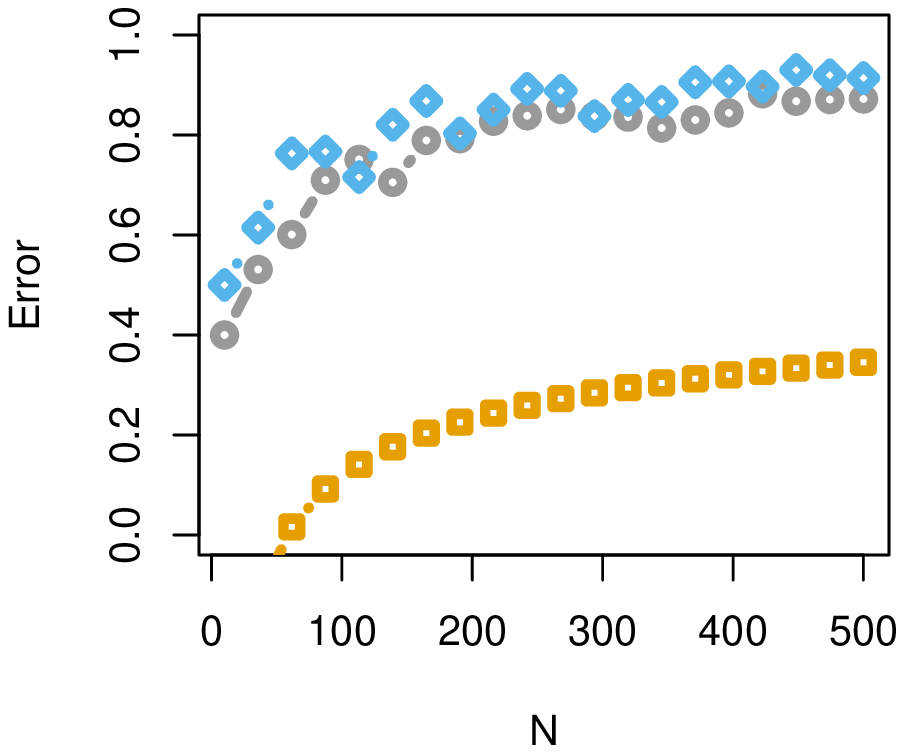}
    \label{fig:N}
    \hspace{-6mm}
    \includegraphics[width=0.28\textwidth]{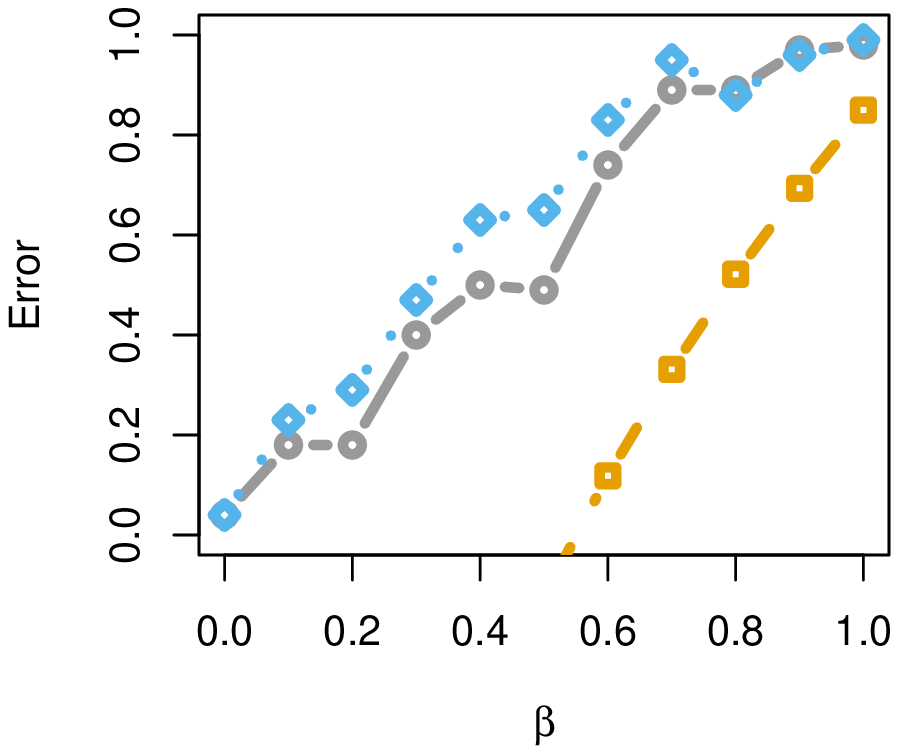}
  \end{minipage}
  
   \vspace{-10mm}
  \begin{minipage}[t]{1\textwidth}
    \hspace{-4mm}
 \includegraphics[width=0.28\textwidth]{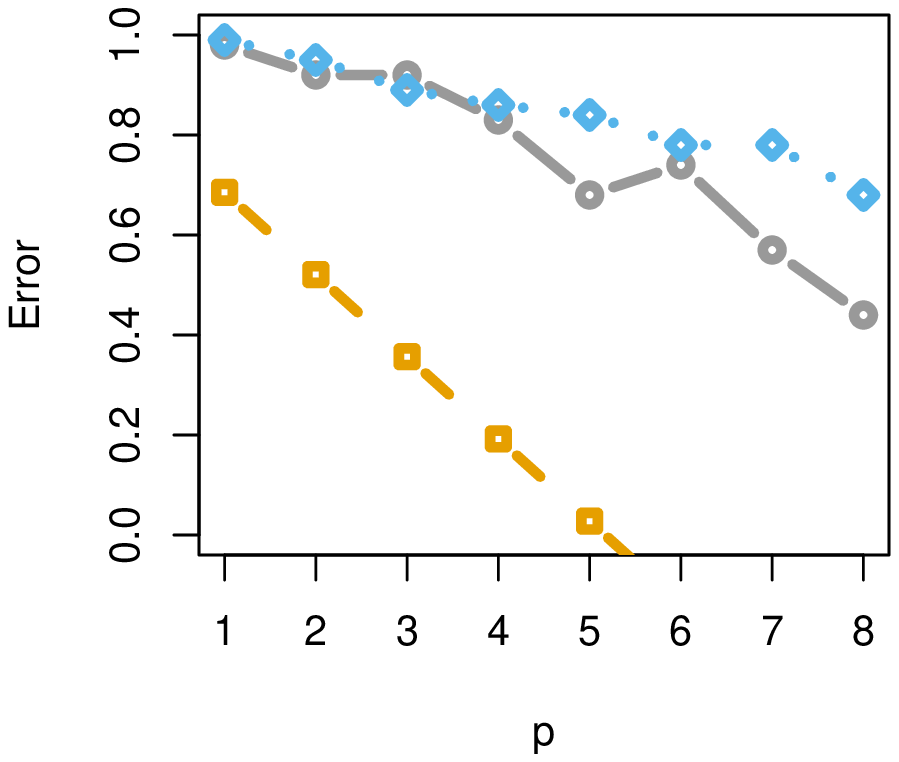}
     \hspace{-6mm}
    \includegraphics[width=0.28\textwidth]{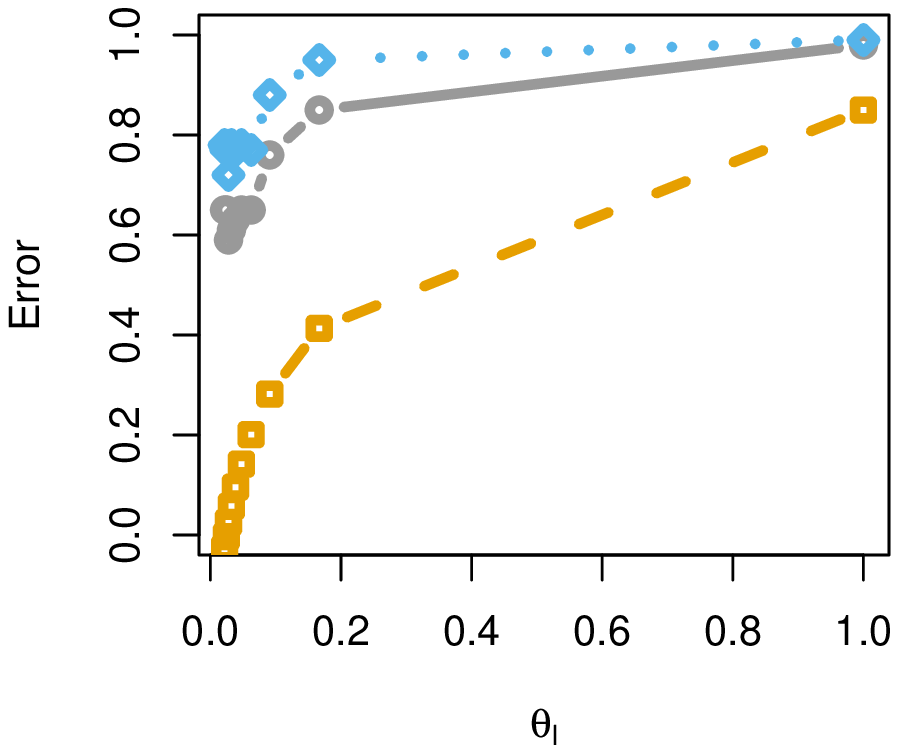}
  \end{minipage}
  
    \caption[]{Theorem \ref{theorem:string} (gold squares) holds on simulated
    categorical records
    compared to exact sampling (grey circles) and Gibbs sampler (blue diamonds).}
       \label{fig:experiments}
\end{figure}

\paragraph{Results of Experiment II}
Figures \ref{fig:experiments-eb} (a)-(d) show Theorem \ref{theorem:string} is tight to the true performances on string data when varying $N$, $\beta$, number of string fields $p_s$ and $c$, respectively. As expected, and similarly to the categorical results, linking records becomes more difficult as $N$ increases, the distortion parameter $\beta$ increases and the parameter $c$ decreases. The effects of parameter variation is less noticeable in the string experiments due to the fact that linking string fields is easier than ones that have been anonymized, i.e., categorical fields. 

\begin{figure}
  \begin{minipage}[t]{1\textwidth}
    \hspace{-4mm}
    \includegraphics[width=0.28\textwidth]{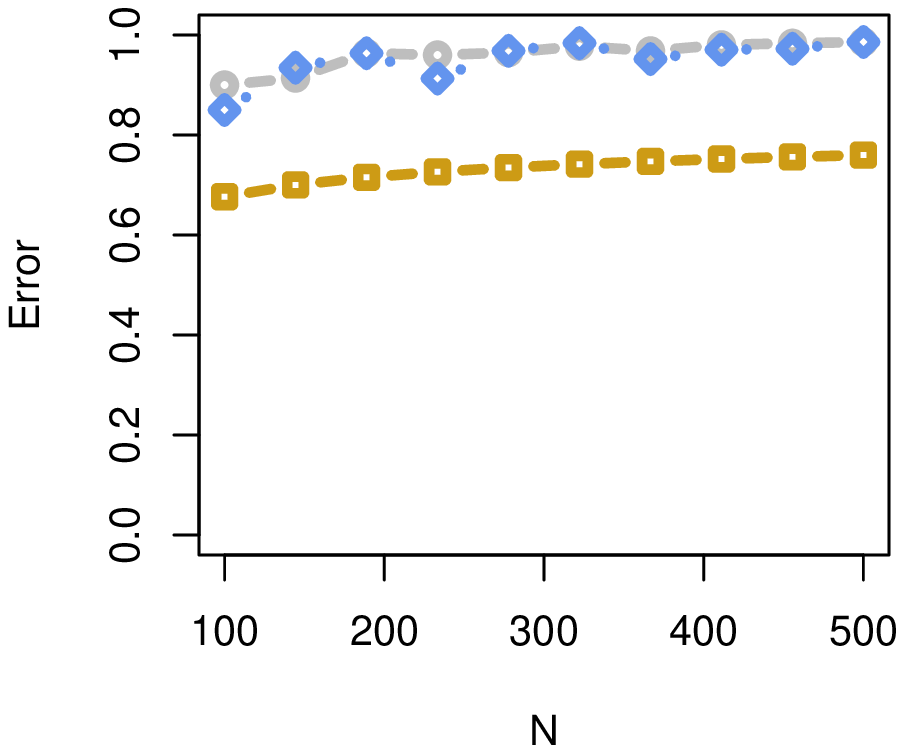}
    \label{fig:N}
    \hspace{-6mm}
    \includegraphics[width=0.28\textwidth]{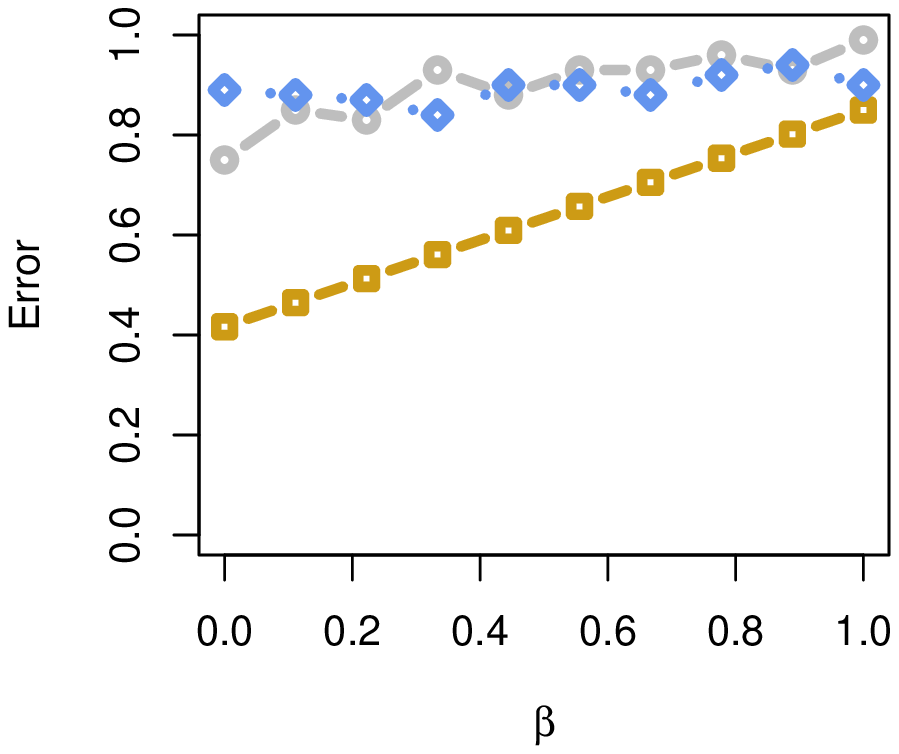}
  \end{minipage}
  
   \vspace{-10mm}
  \begin{minipage}[t]{1\textwidth}
    \hspace{-4mm}
 \includegraphics[width=0.28\textwidth]{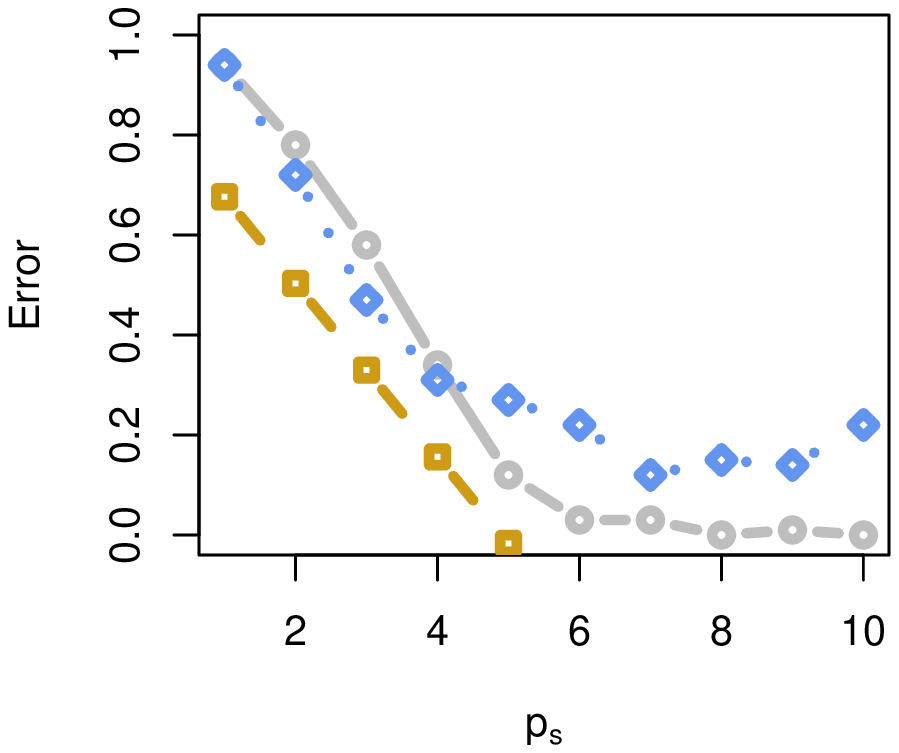}
     \hspace{-6mm}
    \includegraphics[width=0.28\textwidth]{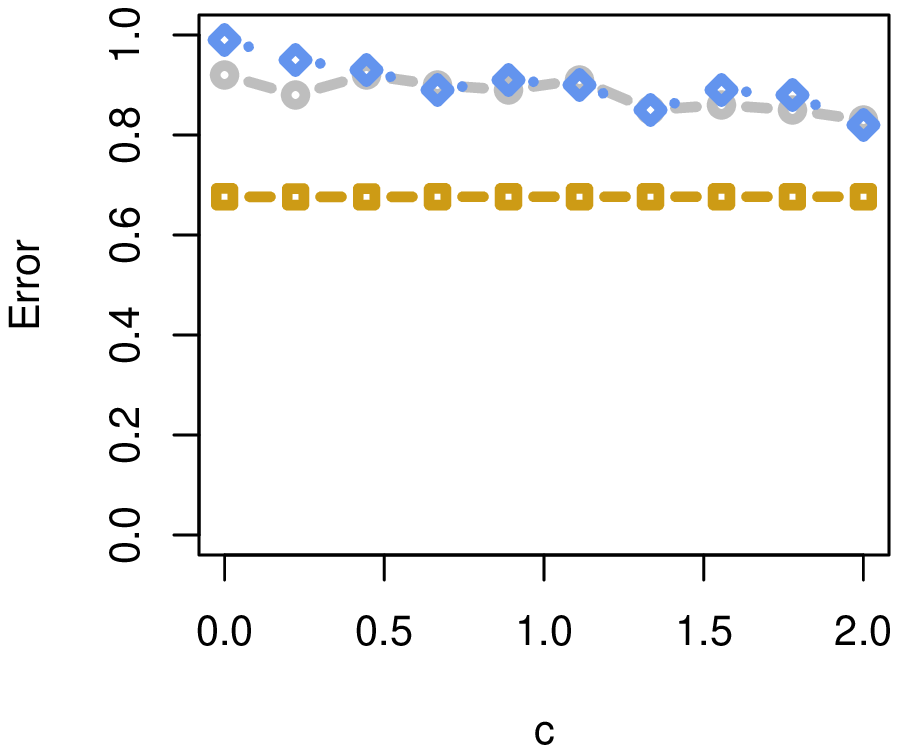}
  \end{minipage}
  
    \caption[]{Theorem \ref{theorem:string} (gold squares) holds on simulated
    noisy string records
    compared to exact sampling (grey circles) and Gibbs sampler (blue diamonds).}
       \label{fig:experiments-eb}
\end{figure}


The Gibbs sampler (blue diamonds) performs almost as well as the exact sampler (grey circles). In fact, due to the conditional entropy version of Fano's inequality and the fact that $H(X | Y) \leq H(X)$, any Gibbs sampler cannot perform better in expectation than an exact sampler. Thus, we believe the gap between the bound (gold squares) and the exact sampler does not necessarily indicate the existence of a better algorithm, but perhaps only some unnecessary slack due to the application of Pinsker's and then reverse Pinsker's inequalities.

\subsection{Discussion of Results}
\label{sec:discussions}
As illustrated in Theorems \ref{theorem:cat} and \ref{theorem:string}
we have derived an upper bound on the KL divergence as well as lower bounds for misclassifying a latent entity.
In Theorem~\ref{theorem:cat} (i), we showed that the latent entities become more distinct when $\gamma$ is increasing. This is in contrast to when $\gamma$ gets closer to 0, since then the latent entities become more similar. In Theorem~\ref{theorem:cat} (ii), we showed that as the distortion parameter $\beta_\ell \rightarrow 1,$ then the upper bound $\gamma$ is infinite. In practice, as illustrated in \cite{steorts14smered}, the latent entities are difficult to distinguish when the amount of distortion is more than 5\% at every field value.  Thus, this corresponds to when the bound is too loose.  On the other hand,  as $\beta_\ell \rightarrow 0,$ the latent entities become more separated. 

We discuss how separated the latent entities are under choices of $\beta_\ell, \theta_\ell$ and $N$, providing guidance to the user in this setting given our simulation results. As practical guidance when the distortion is between 0 to $5\%$ at every feature value, the latents will be more separated and the bound will be be loose. On the other hand, as $\beta_\ell$ increases, the bound becomes tighter. The choice of $\beta_\ell$ can be made using subjective information about the underlying data and tuned using the hyper-parameters $a,b$. (See \cite{steorts14smered, steorts15entity} for choosing such values). On the other hand, we can see that for more realistic values of the distortion parameter in Figure \ref{fig:experiments} (a), (b), and (d) , the bound is quite loose when the distortion parameter $\beta_\ell$ is large. Thus, a loose bound here is warranted due to the amount of noise or model-misspecification being placed into the model as well as the fact that all of the fields being used are categorical. Such results match the intuition given in \cite{steorts14smered}.

%
%


In Theorem~\ref{theorem:string} (ii), we derived a lower bound where the minimum probability of getting a latent entity wrong is controlled by $c,$ which is determined by the moment generating function of the distances between an observed string and a latent string. This bound has the same type of form as the bound in Theorem~\ref{theorem:cat}, however, since we now have string-valued data, we see that the minimum probability of getting a latent entity wrong is dominated by the string-valued variables and specifically, the distances functions used and the constants used. In comparison to \cite{steorts15entity}, this completely matches up with the sensitivity that was seen to the choice of the distance functions as well as the choice of $c$ as this will completely dominate the posterior, and hence, the ability to tell latent entities apart under this posterior. 

{In practice, the driving force of the tightness of the bound is $c$, the steepness parameter of the string distribution in equation 2. As $c$ increases, it is less likely for a string-valued record's features to be distorted to values that are far from that of their latent feature values. This is verified in Figure \ref{fig:experiments-eb}(c), where linkage error decreases as $c$ increases. The work of \cite{steorts15entity} gave practical choices for $c$, which were [0,2].
Similarly, we can speak to the tightness of $d$, which relies on the distortion parameter $\beta_\ell$ not being too small in practice, as verified in Figure \ref{fig:experiments-eb}(b). In terms of the bounds found in Theorem 1 and 2, the empirical Gibbs sampler has tight bounds in almost all situations, except when the number of features is large, $N$ is too small, or $\beta_\ell$ is too small (and similarly for $\theta_\ell$). This coincides with exactly what we would expect in practice from the real experiments of \cite{steorts15entity}. 

For all applications in both categorical and string data, we expect the bounds to be as loose in practice (corresponding to easier record linkage), when the distortion parameter is small ($0-4$) and when the the number of fields is large ($p \geq 5$) or the number of values that each field can take, $M_\ell$, increases (this will be application specific). Finally, the bounds should be tighter, corresponding to more difficult record linkage, as the total number of records N increases (see Figure \ref{fig:experiments}). These parameter values match almost exactly with two real data experiments (corresponding ranges of  parameters) as well as a simulation study from \cite{steorts14smered, steorts??bayesian}.

}

\section{Future Directions}
\label{sec:discussion}
First, we have derived general
performance bounds for record linkage, making connections to KP models and other related Bayesian models. More specifically, we have drawn connections to a wide class of models from Bayesian record linkage. Second, our bound for the categorical Bayesian record linkage model is easily interpretable and matches the intuition of the generative model. Third, our bound for the categorical and noisy string model, takes a similar form to that of the categorical model. We are also able to interpret  this bound in a way that aligns with the interpretations \cite{steorts15entity, steorts14smered, steorts??bayesian} as well as show the practicality of our bounds to the aforementioned papers. More specifically, our bounds are empirically loose for categorical data, which is not unexpected since there is little information available to match on. This contrasts the empirical tight bounds for both categorical and noisy string data. As illustrated in our experiments, with just one string variable, our bounds become much tighter, and as the number of strings increases, the bound becomes more tighter when compared to exact and Gibbs sampling. 

In addition, there has been early work in Bayesian nonparametrics to push forward record linkage. The work of \cite{miller15microclustering} pointed out that most clustering tasks assume cluster sizes grow linearly with the size of the database. Such examples include infinitely exchangeable clustering models, including finite mixture models, Dirichlet process mixture models, and Pitman--Yor process mixture models, which all  make this linear growth assumption. However, in record linkage such an assumption is  undesirable since linkage methods require models that yield clusters whose sizes grow sublinearly with the total number of data points. This observation led the authors to define the microclustering property as well as a new model exhibiting such growth. Our work has been able to provide bounds for the aforementioned work since the prior consider is a KP model. 
In future work it would be helpful to try and draw connections between those proposed in  \cite{miller15microclustering} and \cite{steorts15entity, steorts14smered, steorts??bayesian} in order to generalize such bounds and provide tighter bounds using conditional entropy or other sophisticated bounding methods. 

\subsubsection*{Acknowledgments}
We thank David Choi, David Dunson, David Banks, and the reviewers for improving the ideas that led to publication of this paper.  This work was supported in
part by NSF grants SES-1534412 and SES-1131897, DARPA FA8750-12-2-0324 and FA8750-14-2-0244.
\looseness=-1


\clearpage
\newpage

\bibliographystyle{plain}
\bibliography{references}

\clearpage
\newpage

\appendix


\section{Example of the Record Linkage Process} 
\label{sec:toy}
We provide a toy illustration of the general record linkage process in figure~\ref{fig:linkageProcess}. Consider three databases $D_1, D_2, D_3$ and the notation already introduced, {where here $k=3$}. Suppose the ``population'' entities have four members, where name and address are stripped for anonymity and they are listed by state, age, and sex, as is often the case with de-identified data. 

For instance, assume the true latent entity vector $\bm{y}$ is \emph{known}:
            \[
         \textcolor{black}{\bm{y}}=
            \left[ {\begin{array}{c}
              \text{NC, 72, F} \\
    \text{SC, 73, F} \\
    \text{PA, 91, M}\\
    \text{VA, 94, M}
                \end{array} } \right]
        .\]

The observed records $\bm{X}$ are given in three separate databases (k=3), which would combine into a three-dimensional array.  We write this here as three two-dimensional arrays for notational simplicity:
\[
D_1=\begin{bmatrix}\text{NC, 72, F}\\ \text{SC, \textcolor{red}{70}, F}\\\text{PA, 91, M}\end{bmatrix},
D_2=\begin{bmatrix}\text{SC, \textcolor{red}{37} , F}\\
\text{VA, \textcolor{red}{93}, M}\\
\text{PA, \textcolor{red}{92}, M}\end{bmatrix},
\]
$$
D_3 =
\begin{bmatrix}\text{NC, 72 , F}\\
\text{NC, 72, F}\\
\text{SC, \textcolor{red}{72}, F}\\
\text{VA, 94, M}
\end{bmatrix}.
$$
Here, for the sake of keeping the illustration simple, only age is distorted.
Comparing $\bm{X}$ to $\bm{y}$, the intended linkage and distortions are 
$$
\bm{\Lambda}=\begin{bmatrix} 1 & 2 & 3 &  \\
2 & 4 & 3 &  \\
1 & 1 & 2 & 4 \\
\end{bmatrix},\qquad
$$
$$
\bm{z_1}=  \begin{bmatrix}
0 & 0 & 0\\
0 & 1 & 0 \\
0 & 0 & 0 \\
\end{bmatrix}, 
\bm{z_2}=  \begin{bmatrix}
0 & 1 & 0\\
0 & 1 & 0 \\
0 & 1 & 0\\\end{bmatrix}, 
\bm{z_3}=  \begin{bmatrix}
0 & 0 & 0\\
0 & 0 & 0 \\
0 & 1 & 0\\
0 & 0 & 0 \\
\end{bmatrix}.
$$

In this linkage structure, every entry of $\bm{\Lambda}$ with a value of~2 means that some record from~$\bm{X}$ refers to the latent entity with attributes ``SC, 73, F."  Here, the age of this entity is distorted in all three databases, as can be seen from $\bm{z}$.  (Note that $\bm{z}$, like $\bm{X}$, is also really a three-dimensional array.)  Looking at $\bm{z}_1$ and $\bm{z}_3$, we see that there is only a single record in either list that is distorted, and it is only distorted in one field.  In list 2, however, every record is distorted, though only in one field.

Figure \ref{fig:linkageProcess} illustrates the interpretation of the linkage structure as a bipartite graph in which each edge links a record to a latent entity.
For clarity, figure \ref{fig:linkageProcess} shows that $X_{11}$ and
$X_{22}$ are the same entity and shows that $X_{13}, X_{21},$ and $X_{34}$
correspond to the same entity. The rest are non-matches (or singleton entities).

\begin{figure}[htbp]
\begin{center}
\includegraphics[width=0.35\textwidth]{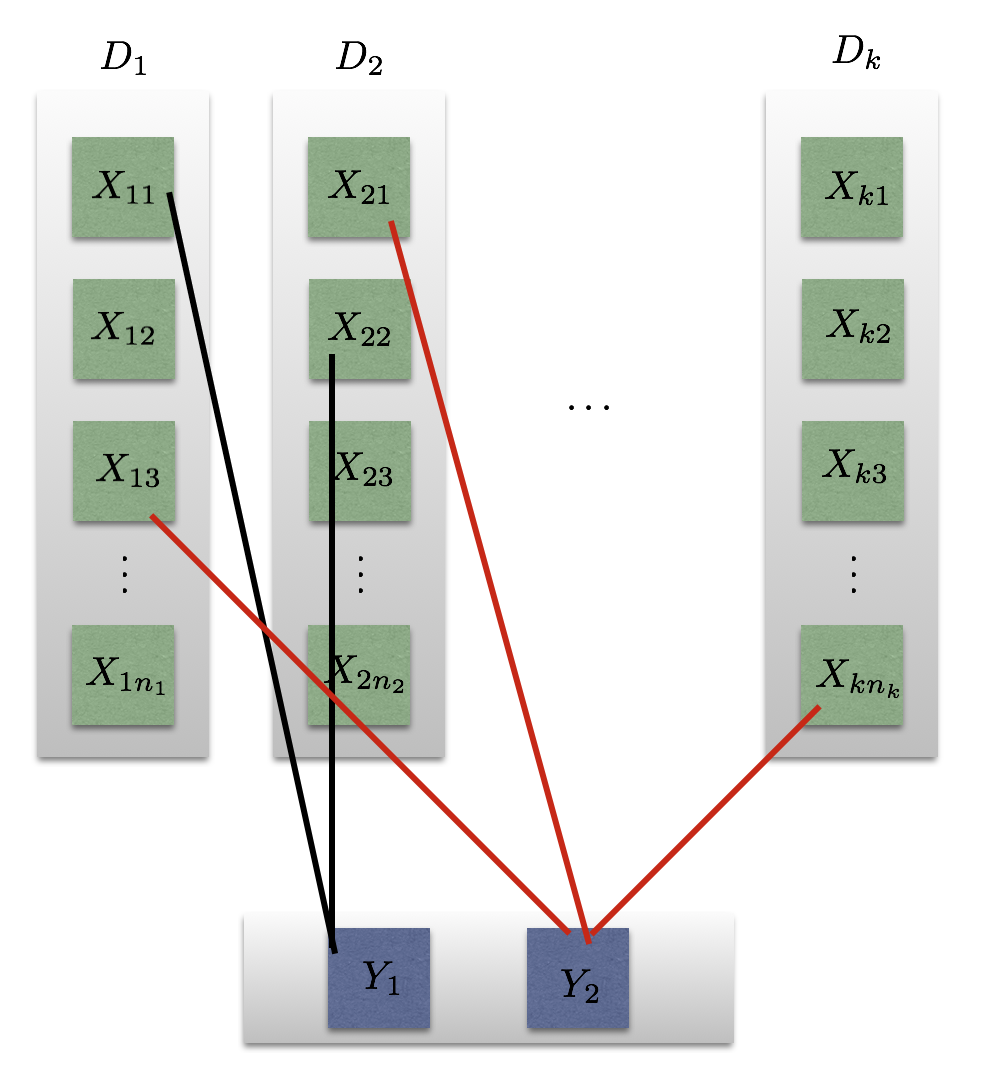}
\caption{A general illustration of the record linkage process. We assume databases $D_1,\ldots D_k$.
We assume records $\bm{X}$ that we cluster to latent entities $\bm{Y}$. Records that belong to the same same latent entity are kept track of using the linkage structure
 $\lam.$}
\label{fig:linkageProcess}
\end{center}
\end{figure}

%
%
%

\section{Derivation of Theorem 1}
\label{app:theorem:string}
We briefly restate the theorem, and then provide its derivation. 

\begin{theorem}
\label{theorem:string}
Assume data $\bX,$ and distributions $P,Q \in \mathcal{P}.$  Assume two distinct linkage structures, denoted by $Y_{\Lambda_{ij}\ell}, Y_{\Lambda^\prime_{ij}\ell}.$
\begin{enumerate}
\item [i)] There is an upper bound on the KL divergence between any $P,Q \in \mathcal{P}$ 
given by $\kappa,$ that is $D_X(P||Q) \leq \kappa.$
\item [ii)] $Pr(\Lambda_{ij} \neq \Lambda_{ij}) \geq 1- \dfrac{\kappa + \ln 2}{\ln r},$
where 
\begin{align*}
\kappa &= \max_{\Lambda_{ij} \neq \Lambda^\prime_{ij}}\bigg[
2 \sum_{\ell} (1-\beta_\ell) I(Y_{\Lambda_{ij}\ell} \neq Y_{\Lambda^\prime_{ij}\ell}) 
  +  \\
& \qquad \sum_{\ell m}  I(Y_{\Lambda_{ij}\ell} \neq Y_{\Lambda^\prime_{ij}\ell}) 
 \left(
1 - e^{-c d(Y_{\Lambda_{ij}\ell}, Y_{\Lambda^\prime_{ij}\ell})}
\right) \\
&\times E[ e^{-c  d(m, Y_{\Lambda_{ij}\ell})} ] \bigg]\ln\{ (\min Q)^{-1} \}
\end{align*}
and $r+1$ is the cardinality of $\mathcal{P}$.
\end{enumerate}
\end{theorem}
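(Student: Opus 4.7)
The plan is to mirror the proof of Theorem \ref{theorem:cat}, relying on the same three ingredients: (a) the field-level factorization $D_{\bX}(P \| Q) = \sum_{ij\ell} D_{X_{ij\ell}}(P\|Q)$ that follows from conditional independence under model \ref{model:string}; (b) the reverse Pinsker inequality of \cite{berend_2014}, $D(P\|Q) \leq \|P-Q\|_1 \ln\{(\min Q)^{-1}\}$; and (c) Fano's inequality to convert the KL bound into the misclassification lower bound in part (ii).

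First I would write out the single-coordinate marginal for $X_{ij\ell}$. For $\ell > p_s$ (categorical), $P(X_{ij\ell}=m) = (1-\beta_\ell) I(Y_{\Lambda_{ij}\ell}=m) + \beta_\ell G_\ell(m)$; for $\ell \leq p_s$ (string), $P(X_{ij\ell}=m) = (1-\beta_\ell) I(Y_{\Lambda_{ij}\ell}=m) + \beta_\ell F_\ell(m \mid Y_{\Lambda_{ij}\ell})$, where $F_\ell(m \mid y) = \alpha_\ell(m) e^{-cd(m,y)}/Z(y)$ and the normalizer $Z(y) = \sum_{w \in S_\ell} \alpha_\ell(w) e^{-cd(w,y)}$ is exactly $E_{W \sim G_\ell}[e^{-cd(W,y)}]$. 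When $Y_{\Lambda_{ij}\ell} = Y_{\Lambda'_{ij}\ell}$ both marginals coincide, so only coordinates with $Y_{\Lambda_{ij}\ell} \neq Y_{\Lambda'_{ij}\ell}$ contribute to the total KL.

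The main technical step is to bound $\|P-Q\|_1$ on that set. A triangle-inequality split between the point-mass and distortion pieces gives
\[
\|P - Q\|_1 \leq 2(1-\beta_\ell) + \beta_\ell \sum_m \bigl|F_\ell(m \mid Y_{\Lambda_{ij}\ell}) - F_\ell(m \mid Y_{\Lambda'_{ij}\ell})\bigr|.
\]
For categorical $\ell$ the second term vanishes and one recovers the $L_1 = 2(1-\beta_\ell)$ bound of Theorem \ref{theorem:cat}. For string $\ell$, I would invoke the metric triangle inequality $|d(m, Y_{\Lambda_{ij}\ell}) - d(m, Y_{\Lambda'_{ij}\ell})| \leq d(Y_{\Lambda_{ij}\ell}, Y_{\Lambda'_{ij}\ell})$, which upgrades to the multiplicative bound $e^{-cd(m, Y_{\Lambda'_{ij}\ell})} \geq e^{-cd(m, Y_{\Lambda_{ij}\ell})} \cdot e^{-cd(Y_{\Lambda_{ij}\ell}, Y_{\Lambda'_{ij}\ell})}$ and its reverse. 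Applying these to both the numerator $\alpha_\ell(m) e^{-cd(m,\cdot)}$ and the normalizer $Z(\cdot)$ of $F_\ell$, then cross-multiplying, each per-$m$ difference is controlled by a constant multiple of $\alpha_\ell(m) e^{-cd(m, Y_{\Lambda_{ij}\ell})} (1 - e^{-cd(Y_{\Lambda_{ij}\ell}, Y_{\Lambda'_{ij}\ell})})$; summing over $m$ the $\alpha_\ell(m) e^{-cd(m, Y_{\Lambda_{ij}\ell})}$ factors collapse into the moment-generating-function term $E[e^{-cd(m, Y_{\Lambda_{ij}\ell})}]$ appearing in $\kappa$.

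Assembling the two contributions, invoking reverse Pinsker at each coordinate, summing over $ij\ell$ weighted by $I(Y_{\Lambda_{ij}\ell} \neq Y_{\Lambda'_{ij}\ell})$, and taking the maximum over pairs of linkage structures yields part (i); part (ii) follows immediately from Fano's inequality applied exactly as in the proof of Theorem \ref{theorem:cat}, with $r+1 = |\mathcal{P}| = N$. I expect the main obstacle to be that third step: since both normalizers $Z(Y)$ and $Z(Y')$ depend on the unknown latent, one has to apply the metric triangle inequality simultaneously in the numerator and denominator of $F_\ell(m \mid \cdot)$ and in the correct direction (avoiding the weaker one-sided factors $e^{cd(Y,Y')} - 1$) so that the resulting $L_1$ bound matches the specific $(1 - e^{-cd(Y,Y')}) E[e^{-cd(m,Y)}]$ form required by $\kappa$.
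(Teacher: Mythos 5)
Your proposal follows essentially the same route as the paper's own derivation: the per-field KL factorization, the $L_1$ split into the point-mass piece $2(1-\beta_\ell)$ and the distortion piece, the metric (reverse) triangle inequality to extract the factor $\left(1-e^{-c\,d(Y_{\Lambda_{ij}\ell},Y_{\Lambda'_{ij}\ell})}\right)$ together with the moment-generating-function term $E[e^{-c\,d(m,Y_{\Lambda_{ij}\ell})}]$, and then reverse Pinsker followed by Fano's inequality with $r+1=N$. The only real difference is that you keep the normalizer of $F_\ell$ explicit and flag its two-sided control as the delicate step, whereas the paper's proof works with the unnormalized likelihood (writing $\propto$ throughout), so your treatment is, if anything, slightly more careful than the published argument.
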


\begin{proof}
We first prove (i). 
Consider
\begin{align}
&Pr(X_{ij\ell} =m \mid \bY, \lam, \bm{\theta}, \bm{\beta}) \notag \\
&= Pr(X_{ij\ell} =m \mid \bY, \lam, \bm{\theta}, \bm{\beta}, Z_{ij\ell} = 1)  \notag \\
 &\times Pr( Z_{ij\ell} = 1 \mid  \bY, \lam, \bm{\theta}, \bm{\beta})\notag \\
&\qquad+ Pr(X_{ij\ell} =m \mid \bY, \lam, \bm{\theta}, \bm{\beta}, Z_{ij\ell} = 0) \notag \\
& \times Pr( Z_{ij\ell} = 0 \mid  \bY, \lam, \bm{\theta}, \bm{\beta})\notag \\
&\propto I(Y_{\Lambda_{ij}\ell} = m) (1-\beta_{\ell}) + \alpha_{\ell } (X_{ij\ell})\beta_{\ell} \notag \\
& \times \bigg[ 
\exp \{
-c\;d(X_{ij\ell}, Y_{\Lambda_{ij}\ell})
\}
\bigg]. \label{string:theFirstPart}
\end{align}
Suppose that $Y_{\Lambda_{ij}\ell} \neq Y_{\Lambda^\prime_{ij}\ell}$ 
Equation \ref{string:theFirstPart} implies that
\begin{align}
D_{X_{ij\ell}}(P\| Q)
&\propto \sum_{m=1}^{M_\ell} 
I(Y_{\Lambda_{ij}\ell} = m) (1-\beta_{\ell})
 + \alpha_{\ell } (m)\beta_{\ell}  \notag \\
 & \times
\bigg[ 
e^{
-c\;d(X_{ij\ell}, Y_{\Lambda_{ij}\ell}) 
} \times \phi
\bigg],
\label{eqn:theSecondPart}
\end{align}
where $\phi = $
$$\log\left[
\frac{
I(Y_{\Lambda_{ij}\ell} = m) (1-\beta_{\ell}) 
+ \alpha_{\ell } (m)\beta_{\ell} \bigg[ 
e^{
-c\;d(m, Y_{\Lambda_{ij}\ell})
}
\bigg]
}
{
I(Y_{\Lambda^\prime_{ij}\ell} = m) (1-\beta_{\ell}) 
+ \alpha_{\ell } (m)\beta_{\ell} \bigg[ 
e^{
-c\;d(m, Y^{\prime}_{\Lambda^\prime_{ij}\ell})
}
\bigg]}
\right]. 
$$

We now consider $\| P-Q\|_1$ and by equation \ref{eqn:theSecondPart}, we find
\begin{align}
\| P-Q\|_1
 &= \sum_{m\in M_\ell}\left| I(Y_{\Lambda_{ij}\ell} = m) (1-\beta_{\ell}) \right. \notag \\
&\quad \left. + \alpha_\ell(m) \beta_{\ell}
 \exp \{
-c\;d(m, Y_{\Lambda_{ij}\ell}^{})
\}
\right.\notag \\
&\left.\qquad-
I(Y_{\Lambda^\prime_{ij}\ell} = m) (1-\beta_{\ell}) \right. \notag \\
& \quad \left.- \alpha_\ell(m)  \beta_{\ell}
 \exp \{
-c\;d(m, Y_{\Lambda^\prime_{ij}\ell})
\}\right|. \label{eqn:theThirdPart}
\end{align}
Then by equation \ref{eqn:theThirdPart}, it is clear that
\begin{align*}
\| P-Q\|_1 &\leq 
 \sum_m  (1-\beta_{\ell})\left | \left[I(Y_{\Lambda_{ij}\ell} = m) -I(Y_{\Lambda^\prime_{ij}\ell} = m) \right] \right|\\
 &+ \sum_m  \alpha_\ell(m)  \beta_{\ell} \notag \\
& \times 
 \left| 
 \exp\{
 -c\; d(m, Y_{\Lambda_{ij}\ell})
 -
  \exp\{
 -c\; d(m, Y_{\Lambda^\prime_{ij}\ell})
 \}\right| \\
 & \leq 2 (1- \beta_\ell) + \beta_\ell \sum_m \alpha_\ell(m)   \notag \\
  & \times \left| 
 \exp\{
 -c\; d(m, Y_{\Lambda_{ij}\ell}) \right. 
 \left.  -
  \exp\{
 -c\; d(m, Y_{\Lambda^\prime_{ij}\ell})
 \} \right|.
\end{align*}
Now assume that two field attributes are different. That is, suppose there exists an $m \neq m^\prime.$ Then we assume that there exists a $\delta >0 $ such that $d(m,m^\prime) \geq \delta.$ 
By the reverse triangle inequality, for any $m, m^\prime, m^{\prime \prime},$
\begin{align}
\label{eqn:reverse}
| d(m, m^\prime) &- d(m, m^ {\prime \prime}) | 
\leq d(m^\prime, m^ {\prime \prime})  \implies \notag \\
& e^{-c [  d(m, m^\prime) -d(m, m^ {\prime \prime}) ] }
 \geq
e^{-c d(m^\prime, m^ {\prime \prime}) }.
\end{align}
Equation \ref{eqn:reverse}
 in turn implies that
\begin{align*}
&\sum_m \left[
\left(1 - e^{-c [  d(m, m^\prime) - d(m, m^ {\prime \prime}) ] }\right)
e^{-c d(m^\prime, m^ {\prime \prime}) } \alpha_\ell (m)
\right] \\
&\geq \sum_m
\left(1 - e^{-c [ d(m^\prime, m^ {\prime \prime}) ] }\right) 
e^{-c d(m^\prime, m^ {\prime \prime}) } \alpha_\ell (m).
\end{align*}

Then 
\begin{align*}
&\sum_m  \alpha_\ell (m)
\left[
e^{-c  d(m, m^\prime)} - e^{ -c d(m, m^ {\prime \prime}]}
\right] \\
 &= 
\sum_m  \alpha_\ell (m)
e^{-c  d(m, m^\prime)} 
\left(
1 - e^{-c d(m^\prime, m^ {\prime \prime})}
\right)  \\
&= 
\left(
1 - e^{-c d(m^\prime, m^ {\prime \prime})}
\right) 
\sum_m  \alpha_\ell (m)
e^{-c  d(m, m^\prime)}  \\
&= \left(
1 - e^{-c d(m^\prime, m^ {\prime \prime})}
\right)  E[ e^{-c  d(m, m^\prime)} ],
\end{align*}
where $M \sim \alpha_\ell.$ 

That is, 
$$\sum_m  \alpha_\ell (m)
e^{-c  d(m, m^\prime)}$$ 
is the moment generating function of $d(M,m^\prime)$ (evaluated at c), where $M \sim \alpha_\ell.$ 
%
This implies that 
\begin{align*}
&\| P-Q\|_1  \\
&\leq 2(1-\beta_\ell) 
+\beta_\ell \sum_m \\
& \qquad \times \left(
1 - e^{-c d(Y_{\Lambda_{ij}\ell}, Y_{\Lambda^\prime_{ij}\ell})}
\right) E[ e^{-c  d(m, Y_{\Lambda_{ij}\ell})} ].
\end{align*}
Then by reverse Pinker's inequality \cite{berend_2014}, we can write 
\begin{align*}
\max_{P, Q \in \mathcal{P}} &D_{\bX}(P||Q) \\
& \leq \max_{\Lambda_{ij} \neq \Lambda^\prime_{ij}}\bigg[
2 \sum_{ij\ell} (1-\beta_\ell) I(Y_{\Lambda_{ij}\ell} \neq Y_{\Lambda_{ij}\ell}^\prime) \\
 &+ \sum_{ij\ell m}  I(Y_{\Lambda_{ij}\ell} \neq Y_{\Lambda_{ij}\ell}^\prime) 
 \left(
1 - e^{-c d(Y_{\Lambda_{ij}\ell}, Y_{\Lambda^\prime_{ij}\ell})}
\right) \\
&  \times  \bigg[
E[ e^{-c  d(m, Y_{\Lambda_{ij}\ell})} ] \bigg] 
\times \ln\{ (\min Q)^{-1} \}\bigg] =: \kappa, 
\end{align*}
where
\begin{align*}
Q = I(Y_{\Lambda^\prime_{ij}\ell} = m) (1-\beta_{\ell}) - \alpha_\ell(m)  \beta_{\ell}
 \exp \{
-c\;d(m, Y_{\Lambda^\prime_{ij}\ell})
\}.
\end{align*}
Thus, (i) is established.
Using Fano's inequality, we find that
$$Pr(\hat \Lambda_{ij} \neq \Lambda_{ij}) \geq 1 -\frac{ \kappa + \ln 2}{\ln r}.$$

We have established that for any $Y_{\Lambda_{ij}\ell} \neq Y_{\Lambda^\prime_{ij}\ell}$, the minimum probability of getting a latent entity wrong is governed by the constant $c.$ That is, 
the lower bound grows as $c$ goes to~$\infty,$ and its rate of growth is determined by the moment generating function of the distances. We have now established (ii).
\end{proof}

\end{document}